\documentclass[preprint,3p,11pt]{elsarticle}

\biboptions{numbers,sort&compress}

\usepackage[T1]{fontenc}
\usepackage[utf8]{inputenc}
\usepackage{amsmath,amssymb,amsthm}
\usepackage{enumitem}
\usepackage{tikz}
\usepackage{graphicx}
\usepackage{hyperref}

\hypersetup{hidelinks}

\newtheorem{lemma}{Lemma}
\newtheorem{theorem}{Theorem}

\tikzset{
  graphnode/.style={circle,draw,fill=white,inner sep=1pt,minimum size=16pt,font=\footnotesize},
  smallgraphnode/.style={circle,draw,fill=white,inner sep=.8pt,minimum size=12pt,font=\scriptsize},
  graphedge/.style={line width=.45pt},
  topcycle/.style={line width=.75pt},
  role/.style={font=\tiny,inner sep=0pt}
}

\begin{document}

\begin{frontmatter}

\title{Subcubic \texorpdfstring{$K_4$}{K4}-minor-free graphs without crumby colorings}
\author[inst1,inst2]{J\'ozsef Pint\'er\corref{cor1}}
\ead{pinterj@edu.bme.hu}
\cortext[cor1]{Corresponding author.}

\affiliation[inst1]{organization={Department of Stochastics, Institute of Mathematics,
  Budapest University of Technology and Economics},
  addressline={Egry J\'ozsef utca 1},
  postcode={1111},
  city={Budapest},
  country={Hungary}}

\affiliation[inst2]{%
  organization={HUN-REN--BME Stochastics Research Group},
  addressline={Egry J\'ozsef utca 1},
  postcode={1111},
  city={Budapest},
  country={Hungary}}

\begin{abstract}
Motivated by Wegner's conjecture on squares of planar graphs, Thomassen conjectured that every 3-connected cubic graph on at least eight vertices admits a red-blue vertex coloring in which the blue subgraph has maximum degree at most 1, while the red subgraph has minimum degree at least 1 and contains no $P_4$. Such colorings are now called crumby colorings. Although this conjecture was disproved in general by Bellitto, Klimo\v{s}ov\'a, Merker, Witkowski and Yuditsky, positive results of Bar\'at, Bl\'azsik and Dam\'asdi led them, in the same subcubic setting, to conjecture that every \texorpdfstring{$K_4$}{K4}-minor-free graph admits a crumby coloring. We disprove this conjecture with a connected subcubic partial 2-tree on 18 vertices. We also disprove its natural 2-connected version with a 2-connected subcubic partial 2-tree on 40 vertices with no crumby coloring. Consequently, the
obstruction to crumby colorability already occurs within treewidth two,
even under 2-connectivity.
\end{abstract}

\begin{keyword}
Crumby coloring \sep Subcubic graph \sep $K_4$-minor-free graph \sep Partial 2-tree \sep Series-parallel graph
\end{keyword}

\end{frontmatter}

\section{Introduction}

A red-blue coloring of the vertices of a graph $G$ is called \emph{crumby} if the subgraph induced by the blue vertices has maximum degree at most 1, while the subgraph induced by the red vertices has minimum degree at least 1 and contains no path on four vertices. Throughout, a $P_4$ means a simple path on four vertices, not necessarily an induced path. Equivalently, the blue vertices induce a disjoint union of isolated vertices and edges, and every red component has no $P_4$ and no isolated vertex.

This coloring condition arose from Thomassen's work on Wegner's conjecture on coloring squares of planar graphs. Wegner conjectured that the square of every planar graph of maximum degree at most 3 is 7-colorable \cite{wegner1977graphs}; this was proved independently by Thomassen \cite{thomassen2018square} and by Hartke, Jahanbekam and Thomas \cite{hartke2016chromatic}. In this context Thomassen formulated a stronger structural conjecture for 3-connected cubic graphs. Bar\'{a}t proved positive results for Generalized Petersen graphs and verified the analogous statement for subcubic trees \cite{barat2019decomposition}. Bellitto, Klimo\v{s}ov\'{a}, Merker, Witkowski and Yuditsky then constructed an infinite family of counterexamples to Thomassen's conjecture \cite{bellitto2021counterexamples}.

Bar\'{a}t, Bl\'{a}zsik and Dam\'{a}sdi subsequently studied which restricted subcubic graph classes still admit crumby colorings \cite{barat2023crumby}. They proved positive results for 2-connected outerplanar graphs, subdivisions of $K_4$, 1-subdivisions of cubic graphs, and genuine subdivisions of subcubic graphs. Since the prototype counterexample of Bellitto et al.\ contains a $K_4$ minor, these results motivated the following conjecture.

\begin{quote}
\textbf{Conjecture 16 of Bar\'{a}t, Bl\'{a}zsik and Dam\'{a}sdi \cite{barat2023crumby}.} Every $K_4$-minor-free graph admits a crumby coloring.
\end{quote}

This conjecture is posed in their study of subcubic graphs. The counterexamples below are subcubic, so they disprove the conjecture both literally and in its intended subcubic setting.

The purpose of this note is to show that the conjecture is false in a strong form. Our first example is connected but not 2-connected. Bar\'at, Bl\'azsik and Dam\'asdi's work also suggests a natural possible escape from such a counterexample. Their proof of the outerplanar case is carried out under a 2-connectivity assumption, and their subsequent discussion identifies the passage from 2-connected blocks to general outerplanar graphs as a source of difficulty. Thus, after a non-2-connected counterexample to their \texorpdfstring{$K_4$}{K4}-minor-free conjecture, it remains natural to ask whether the conjecture survives under an additional 2-connectivity assumption. Our second construction also answers this question in the negative.

\begin{figure}[t]
\centering
\resizebox{.78\linewidth}{!}{%
\begin{tikzpicture}[x=1cm,y=1cm]
  \node[graphnode] (g1) at (0,0.55) {$g_1$};
  \node[graphnode] (h1) at (0,-0.55) {$h_1$};
  \node[graphnode] (e1) at (1.1,1.0) {$e_1$};
  \node[graphnode] (f1) at (1.1,-1.0) {$f_1$};
  \node[graphnode] (c1) at (2.2,1.0) {$c_1$};
  \node[graphnode] (d1) at (2.2,-1.0) {$d_1$};
  \node[graphnode] (a1) at (3.3,1.0) {$a_1$};
  \node[graphnode] (b1) at (3.3,-1.0) {$b_1$};
  \node[graphnode] (x1) at (4.4,0) {$x_1$};

  \node[graphnode] (x2) at (5.35,0) {$x_2$};
  \node[graphnode] (a2) at (6.45,1.0) {$a_2$};
  \node[graphnode] (b2) at (6.45,-1.0) {$b_2$};
  \node[graphnode] (c2) at (7.55,1.0) {$c_2$};
  \node[graphnode] (d2) at (7.55,-1.0) {$d_2$};
  \node[graphnode] (e2) at (8.65,1.0) {$e_2$};
  \node[graphnode] (f2) at (8.65,-1.0) {$f_2$};
  \node[graphnode] (g2) at (9.75,0.55) {$g_2$};
  \node[graphnode] (h2) at (9.75,-0.55) {$h_2$};

  \foreach \u/\v in {x1/a1,x1/b1,a1/c1,b1/d1,c1/d1,c1/e1,d1/f1,e1/g1,e1/h1,f1/g1,f1/h1,
                      x2/a2,x2/b2,a2/c2,b2/d2,c2/d2,c2/e2,d2/f2,e2/g2,e2/h2,f2/g2,f2/h2,x1/x2}
    \draw[graphedge] (\u) -- (\v);
\end{tikzpicture}%
}
\caption{The connected counterexample $G_{18}$, drawn as two copies of the rooted graph $F$ joined by the edge $x_1x_2$.}
\label{fig:g18}
\end{figure}

\section{A connected \texorpdfstring{$K_4$}{K4}-minor-free graph without a crumby coloring}

Let $F$ be the graph with vertex set
\[
\{x,a,b,c,d,e,f,g,h\}
\]
and edge set
\[
\{xa,xb,ac,bd,cd,ce,df,eg,eh,fg,fh\}.
\]
Let $G_{18}$ be obtained from two disjoint copies $F_1,F_2$ of $F$ by adding the edge $x_1x_2$. Thus, in the $i$th copy, the vertices are
\[
 x_i,a_i,b_i,c_i,d_i,e_i,f_i,g_i,h_i,
\]
and
\[
E(G_{18})=\bigcup_{i=1}^{2}\{x_ia_i,x_ib_i,a_ic_i,b_id_i,c_id_i,c_ie_i,d_if_i,e_ig_i,e_ih_i,f_ig_i,f_ih_i\}\cup\{x_1x_2\}.
\]

For an illustration of the graph see Figure~\ref{fig:g18}.

We first record the local forcing properties of this gadget: even when the copy of $F$ is placed inside a larger graph, a crumby coloring severely restricts the possible colors near the attachment vertices.

\begin{lemma}
\label{lem:rooted-gadget}
Let $r\in\{a,b\}$. Consider a copy of $F$ inside an ambient graph in which only $x$ and $r$ may have neighbours outside the copy. In any crumby coloring of the ambient graph, the following assertions hold.
\begin{enumerate}[label=(\roman*)]
\item If $x$ is blue, then $r$ is red and has no red neighbour inside the copy of $F$.
\item If $x$ is red, then at least one of $a,b$ is red, even if $x$ has a red neighbour outside the copy of $F$.
\end{enumerate}
\end{lemma}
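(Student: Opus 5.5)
The plan is to classify all crumby restrictions of the coloring to the inner vertices $c,d,e,f,g,h$. These vertices have all their neighbours inside the copy of $F$, and $g,h$ have degree two with $N(g)=N(h)=\{e,f\}$. Both assertions will then be read off from a short list of admissible patterns.

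\textbf{Step 1: classify $e,f,g,h$.}
\begin{itemize}
\item $e$ and $f$ are not both blue. Otherwise, if $g$ or $h$ is red, it is an isolated red vertex. If both are blue, then $e$ has two blue neighbours.
\item If $e,f$ are red and $g,h$ are blue, then $e$ needs a red neighbour, which can only be $c$. Likewise $d$ must be red. This gives the red $P_4$ $e\,c\,d\,f$, a contradiction.
\item If $e,f$ and both $g,h$ are red, then $g\,e\,h\,f$ is a red $P_4$, a contradiction.
\item The remaining case with $e,f$ red is, up to symmetry, $g$ red and $h$ blue. Then $e\,g\,f$ is a red $P_3$, so $c$ and $d$ are blue. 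Since $c,d$ are adjacent blue vertices, this forces $a$ and $b$ to be red.
\end{itemize}
This is \emph{pattern (A)}: $c,d$ blue and $a,b$ red.

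\textbf{Step 2: $e$ red and $f$ blue.} Not both of $g,h$ are blue, since $f$ would then have two blue neighbours. Any red one among $g,h$ has $e$ as its red neighbour.
\begin{itemize}
\item First, $c$ is blue. If both $g,h$ are red, then $g\,e\,h$ is a red $P_3$, so $c$ is blue. If exactly one is blue, say $h$, then $f$ already has the blue neighbour $h$, so $d$ is red. If $c$ were red, $d\,c\,e\,g$ would be a red $P_4$.
\item Next, $d$ is red. With $c$ blue, a blue $d$ would either have two blue neighbours $c,f$, or give $f$ two blue neighbours.
\item Finally, $d$ needs a red neighbour, and $c,f$ are blue, so $b$ is red.
\end{itemize}
This is \emph{pattern (B)}: $c$ blue, $d$ red, $b$ red. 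Its mirror image, $f$ red and $e$ blue, is \emph{pattern (B$'$)}: $d$ blue, $c$ red, $a$ red.

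In every pattern at least one of $a,b$ is red, with no hypothesis on $x$. This already proves (ii), including the case where $x$ has red neighbours outside the copy.

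\textbf{Step 3: assertion (i).} Assume $x$ is blue.
\begin{itemize}
\item \emph{Pattern (A).} Here $a,b$ are red, and their inside neighbours are $x$ and one of $c,d$, all blue. So $r$ is red with no red inside neighbour.
\item \emph{Pattern (B).} First, $a$ is red. If $a$ were blue, its blue neighbours $x$ and $c$ would be two blue neighbours. Next, $r\ne b$: if $r=b$, then $a$ has no outside neighbours, and all its neighbours $x,c$ are blue, so $a$ would be an isolated red vertex. Hence $r=a$, which is red, and its inside neighbours $x,c$ are blue.
\item \emph{Pattern (B$'$).} By the mirror argument, it forces $r=b$, which is red with blue inside neighbours $x,d$.
\end{itemize}

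The main obstacle is making the case analysis of Steps 1–2 airtight. In particular, one must check each subcase for both the red-$P_4$ condition and the ``no isolated red vertex'' condition, and must never use anything about $x$, $a$ or $b$ beyond what the hypotheses allow. The degree-two vertices $g,h$ carry the whole argument, so I would organize the proof around the colors of $e$ and $f$ as above.
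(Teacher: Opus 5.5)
There is a genuine gap in Step 2. In the subcase where $e$ is red, $f$ is blue and both $g,h$ are red, you infer from the red path $g\,e\,h$ that $c$ is blue. But $c$ is adjacent to the \emph{middle} vertex $e$ of that path, not to an endpoint. A red $c$ therefore only produces the red claw with centre $e$ and leaves $c,g,h$, and a claw contains no $P_4$. (In Step 1 the analogous inference was valid, because there $c$ and $d$ attach to the endpoints of $e\,g\,f$.)

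In fact this subcase admits another pattern. If $c$ is red, then $a$ and $d$ must be blue, since otherwise $a\,c\,e\,g$ or $d\,c\,e\,g$ is a red $P_4$. Then $d$ already has the blue neighbour $f$, so $b$ is red. Take the colouring with $b,c,e,g,h$ red and $a,d,f$ blue. It satisfies every constraint at $c,d,e,f,g,h$: $df$ is a blue edge with no further blue neighbours, and the red component $\{c,e,g,h\}$ is a star. So (A), (B), (B$'$) is not a complete classification. Step 3 never examines this configuration, nor its mirror with $a,d,f,g,h$ red and $b,c,e$ blue.

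The repair is short. In the new pattern $b$ is red, so (ii) is unaffected. For (i), if $x$ is blue, then both inside neighbours $x,d$ of $b$ are blue. So $b$ needs a red neighbour outside $F$, which forces $r=b$, and $r$ is then red with no red neighbour inside $F$. Symmetrically, the mirror pattern forces $r=a$.

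With this case added, your classify-the-inner-vertices approach is a valid alternative to the paper's proof. The paper avoids a full classification: for (i) it fixes $x$ blue and $r=a$ from the outset, and for (ii) it assumes $a,b$ both blue and derives a contradiction. Your route has the mild advantage of showing that $a,b$ are never both blue, whatever the colour of $x$.
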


\begin{proof}
By symmetry it is enough to prove the first assertion for $r=a$. Suppose that $x$ is blue.

First assume that $a$ is blue. Then $b$ is red, since otherwise $x$ would have two blue neighbours, and $c$ is red, since otherwise $a$ would have two blue neighbours. The red vertex $b$ forces $d$ red. If $e$ were red, then $e-c-d-b$ would be a red $P_4$, so $e$ is blue. If $f$ were red, then neither $g$ nor $h$ could be red, because $c-d-f-g$ or $c-d-f-h$ would be a red $P_4$; hence $g,h$ would be blue, giving the blue vertex $e$ two blue neighbours. Thus $f$ is blue. But then a red vertex among $g,h$ would have no red neighbour, while if both $g,h$ are blue then $e$ again has two blue neighbours. This contradiction shows that $a$ is not blue.

Thus $a$ is red. If $c$ were red, then we again get a contradiction. Indeed, if $b$ is red, then $b$ forces $d$ red and $a-c-d-b$ is a red $P_4$. If $b$ is blue, then $d$ is red, because $b$ already has the blue neighbour $x$. Now $f$ cannot be red, since $a-c-d-f$ would be a red $P_4$; hence $f$ is blue. Since $f$ is blue and $d$ is red, at least one of $g,h$ is red; call it $z$. The red vertex $z$ has neighbours only $e$ and $f$, and $f$ is blue, so $e$ is red. But then $d-c-e-z$ is a red $P_4$. Therefore $c$ is blue. The two neighbours of $a$ inside $F$ are $x$ and $c$, both blue, so $a$ has no red neighbour inside $F$. This proves (i).

For (ii), suppose that $x$ is red and has a red neighbour outside $F$, and suppose for a contradiction that both $a,b$ are blue. If both $c,d$ are blue, then $c$ has two blue neighbours $a,d$. If exactly one of $c,d$ is red, then by symmetry let $c$ be red and $d$ blue. The vertex $c$ forces $e$ red. Also $f$ must be red; otherwise the blue vertex $d$ would have two blue neighbours, namely $b$ and $f$. The red vertex $f$ then needs a red neighbour $z$ among $g,h$, and $c-e-z-f$ is a red $P_4$. Finally suppose that $c,d$ are both red. If $e,f$ are both red, then $e-c-d-f$ is a red $P_4$. If $e,f$ are both blue, then no vertex among $g,h$ can be red, so $g,h$ are both blue and $e$ has two blue neighbours. If, say, $e$ is red and $f$ is blue, then at least one of $g,h$ is red, since otherwise the blue vertex $f$ would have two blue neighbours $g,h$; for such a red vertex $z$, the path $d-c-e-z$ is a red $P_4$. The case $e$ blue and $f$ red is symmetric. Hence at least one of $a,b$ is red. This proves (ii).
\end{proof}

\begin{theorem}
\label{thm:g18}
The graph $G_{18}$ is connected, subcubic, $K_4$-minor-free, and has no crumby coloring.
\end{theorem}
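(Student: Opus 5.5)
Connectivity and subcubicity are routine to check, and the real content is non-colorability; here is the plan.

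\textbf{Structural properties.} Connectivity is immediate: each $F_i$ is connected and the edge $x_1x_2$ joins them. For subcubicity I will list the degrees inside $F$: $\deg x=2$, $\deg a=\deg b=2$, $\deg c=\deg d=\deg e=\deg f=3$, $\deg g=\deg h=2$. Adding $x_1x_2$ raises only $x_i$ to degree $3$.

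For $K_4$-minor-freeness I will use that a graph is $K_4$-minor-free if and only if each block is. The blocks of $G_{18}$ are the bridge $x_1x_2$ and, within each $F_i$, the block $B$ on $\{x,a,b,c,d\}$ and the block on $\{c,d,e,f,g,h\}$; the latter is indeed a single block, since $c$ and $d$ are adjacent. Each block is series-parallel and can be reduced to a single edge by the usual series/parallel reductions, which certifies that it has no $K_4$ minor.
\begin{itemize}
\item[] $B$ is a $5$-cycle, so it is trivially $K_4$-minor-free.
\item[] In the other block, $e$--$g$--$f$ and $e$--$h$--$f$ are parallel paths of length $2$. Suppressing $g$ and $h$ gives a double edge $ef$, which collapses to a single edge $ef$. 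What remains is the $4$-cycle $c,e,f,d$, which is $K_4$-minor-free.
\end{itemize}
Alternatively, one can observe that every block has at most $5$ vertices of degree $\ge 3$ and check directly that no four branch sets exist, but the reduction argument is cleaner.

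\textbf{No crumby coloring.} Suppose a crumby coloring exists. The lemma applies to each copy $F_i$, with the ambient graph $G_{18}$ and the outside neighbour of $x_i$ being $x_{3-i}$; no vertex $r$ of $F_i$ other than $x_i$ has an outside neighbour. I split by the colors of $x_1$ and $x_2$.
\begin{itemize}
\item[] \emph{$x_1$ blue.} By (i), applied with $r=a_1$ and with $r=b_1$, both $a_1$ and $b_1$ are red, and neither has a red neighbour inside $F_1$. Since $a_1$ has no neighbours outside $F_1$, it has no red neighbour at all. This contradicts the requirement that the red subgraph have minimum degree at least $1$.
\item[] \emph{$x_2$ blue.} The same argument in $F_2$ gives a contradiction.
\item[] \emph{$x_1$ and $x_2$ both red.} They are adjacent, so each has a red neighbour outside its own copy. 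By (ii), some $y_1\in\{a_1,b_1\}$ is red and some $y_2\in\{a_2,b_2\}$ is red. Then $y_1-x_1-x_2-y_2$ is a red $P_4$, a contradiction.
\end{itemize}
These cases are exhaustive, so $G_{18}$ has no crumby coloring.

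The main obstacle is already settled by the lemma. The only points needing care are that the hypothesis "only $x$ and $r$ may have outside neighbours" is satisfied in $G_{18}$, and the block decomposition used for $K_4$-minor-freeness.
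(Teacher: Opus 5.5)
Your non-colorability argument is the paper's argument. Lemma~\ref{lem:rooted-gadget}(i) rules out a blue root because $a_i$ has no outside neighbour. Lemma~\ref{lem:rooted-gadget}(ii), applied to the adjacent red roots, then yields the red path $y_1-x_1-x_2-y_2$. Your degree count and connectivity check are also correct.

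The gap is in the $K_4$-minor-freeness step, where your block decomposition is wrong. $F$ is $2$-connected: it contains the cycle $x-a-c-e-g-f-d-b-x$ together with the ear $e-h-f$ and the chord $cd$. So the blocks of $G_{18}$ are $F_1$, $F_2$ and the bridge $x_1x_2$. The two sets you call blocks, $\{x,a,b,c,d\}$ and $\{c,d,e,f,g,h\}$, share the two vertices $c,d$, which distinct blocks never do. The fact that a graph is $K_4$-minor-free if and only if each block is therefore does not apply to your pieces. Moreover, gluing two $K_4$-minor-free graphs along two vertices can create a $K_4$ minor. For example, subdivide the edges $12$ and $34$ of $K_4$ by new vertices $s$ and $t$. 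This graph separates at $\{1,2\}$ into the path $1-s-2$ and a subdivision of $K_4$ minus an edge. Both pieces are $K_4$-minor-free, but the union is a subdivided $K_4$. As written, the inference from the two pieces to $F$ is unjustified.

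The repair is easy, and either of two fixes works:
\begin{itemize}
\item Note that $cd$ is an edge of both pieces. Then $F$ is a clique-sum of order $2$ of the $5$-cycle and your second piece, and graphs of treewidth at most $2$ are closed under clique-sums of order at most $2$.
\item Run your reductions on all of $F$. After you suppress $g,h$ and merge the double edge $ef$, the vertices $e$ and $f$ have degree $2$. Suppressing them creates a second edge $cd$. Merging it leaves the $5$-cycle $x-a-c-d-b-x$, which reduces to an edge. Then apply the block lemma to the true blocks $F_1$, $F_2$ and $x_1x_2$.
\end{itemize}

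With this fix, your series--parallel reduction is a valid alternative to the paper's certificate. The paper instead uses a width-$2$ elimination ordering $g,h,e,f,c,d,a,b$, leaving $x_1,x_2$ to the end. That ordering is essentially the same reduction sequence, recorded as a treewidth certificate.
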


\begin{proof}
The graph is visibly connected. In a copy of $F$, the vertices $x,a,b,g,h$ have degree 2 and the vertices $c,d,e,f$ have degree 3. In $G_{18}$ the two vertices $x_1,x_2$ have degree 3, and no degree exceeds 3.

We first show that $G_{18}$ is $K_4$-minor-free. We use the standard
elimination-order certificate for treewidth. Recall that an ordering of the
vertices has width at most $2$ if, when the vertices are eliminated in this
order and missing edges are added among the remaining neighbours of the
eliminated vertex, each eliminated vertex has at most two remaining neighbours.
Such an ordering certifies treewidth at most $2$. Since graphs of treewidth at
most $2$ are partial $2$-trees, equivalently $K_4$-minor-free graphs, it is
enough to find such an ordering.
In one copy of $F$, eliminate the vertices in the order
\[
 g,h,e,f,c,d,a,b,
\]
leaving $x$ until the end. At each step the eliminated vertex has at most two remaining neighbours, after adding the usual fill edge between its remaining neighbours if necessary:
\[
\begin{array}{c|c}
\text{vertex} & \text{remaining neighbours} \\ \hline
 g & e,f \\
 h & e,f \\
 e & c,f \\
 f & c,d \\
 c & a,d \\
 d & a,b \\
 a & x,b \\
 b & x.
\end{array}
\]
Do this in both copies and then eliminate $x_1,x_2$. The edge $x_1x_2$ does not affect the per-copy eliminations, because each $x_i$ is left until all internal vertices of $F_i$ have been eliminated; at that point $x_1$ has only the remaining neighbour $x_2$. This is an elimination ordering of width at most 2. Hence $G_{18}$ has treewidth at most 2, so it has no $K_4$ minor.

It remains to exclude crumby colorings. Apply Lemma~\ref{lem:rooted-gadget}(i) in either copy $F_i$. If $x_i$ were blue, then $a_i$ would be red and would have no red neighbour inside $F_i$; since $a_i$ has no outside neighbour, it would have no red neighbour at all, contradicting the red minimum-degree condition. Thus both $x_1$ and $x_2$ are red. Now $x_i$ has the outside red neighbour $x_{3-i}$, and Lemma~\ref{lem:rooted-gadget}(ii) implies that at least one of $a_i,b_i$ is red. Choose red vertices $p_i\in\{a_i,b_i\}$. Then
\[
 p_1-x_1-x_2-p_2
\]
is a red $P_4$, a contradiction. Thus $G_{18}$ has no crumby coloring.
\end{proof}

Consequently, $G_{18}$ is a connected \texorpdfstring{$K_4$}{K4}-minor-free counterexample to Conjecture~16 of Bar\'at, Bl\'azsik and Dam\'asdi~\cite{barat2023crumby}.

A natural next question is whether every $2$-connected subcubic \texorpdfstring{$K_4$}{K4}-minor-free graph admits a crumby coloring. 

\section{A 2-connected \texorpdfstring{$K_4$}{K4}-minor-free graph without a crumby coloring}

We now give a 2-connected counterexample. The proof uses the same rooted graph $F$ as before, together with one small derived gadget.

Let $R(s,t)$ be the graph obtained from a copy of $F$ with vertices $x,a,b,c,d,e,f,g,h$ by adding the three edges
\[
 sx, \qquad st, \qquad ta.
\]
Thus $R(s,t)$ contains the copy of $F$ on $x,a,b,c,d,e,f,g,h$, with $x$ as the root and $a$ as the distinguished neighbour of the root.

Call a red vertex $v$ \emph{rich in a subgraph} $H$ if there is a red path $v-p-q$ contained in $H$.

\begin{lemma}
\label{lem:rich-gadget}
In any crumby coloring of an ambient graph containing $R(s,t)$, if $s$ is red, then $s$ is rich inside $R(s,t)$, provided that only $s$ and $t$ may have neighbours outside $R(s,t)$.
\end{lemma}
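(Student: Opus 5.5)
The plan is to split on the colour of the root $x$ of the copy of $F$ inside $R(s,t)$ and to apply Lemma~\ref{lem:rooted-gadget} with $r=a$. First I check that the lemma's hypothesis holds. Inside $R(s,t)$, the only edges leaving the copy of $F$ are $sx$ and $ta$. By assumption, no vertex of $F$ has a neighbour outside $R(s,t)$. Hence only $x$ and $a$ can have neighbours outside the copy of $F$, and Lemma~\ref{lem:rooted-gadget} applies with $r=a$. Now fix a crumby coloring of the ambient graph in which $s$ is red.

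\emph{Case $x$ blue.} By Lemma~\ref{lem:rooted-gadget}(i), $a$ is red and has no red neighbour inside $F$. The red minimum-degree condition says $a$ still needs some red neighbour. The only neighbour of $a$ outside $F$ is $t$, so $t$ must be red. Then $s-t-a$ is a red path inside $R(s,t)$, and $s$ is rich.

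\emph{Case $x$ red.} Here $x$ has the red neighbour $s$ outside the copy of $F$. Lemma~\ref{lem:rooted-gadget}(ii) covers exactly this situation and gives a red vertex $p\in\{a,b\}$. Both $a$ and $b$ are neighbours of $x$, so $s-x-p$ is a red path contained in $R(s,t)$. Again $s$ is rich.

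The two cases are exhaustive, which proves the lemma. I expect no step to be a real obstacle, since the forcing work was already done in Lemma~\ref{lem:rooted-gadget}. The only points needing care are these:
\begin{itemize}
\item verifying that the attachment hypothesis of Lemma~\ref{lem:rooted-gadget} holds inside $R(s,t)$;
\item noting that in the blue case the forced red neighbour of $a$ can only be $t$, because $a$ has no other neighbours outside $F$.
\end{itemize}
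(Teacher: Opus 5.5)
Your proof is correct and is the paper's argument. You split on the colour of the root $x$. When $x$ is red, Lemma~\ref{lem:rooted-gadget}(ii) gives the red path $s-x-z$; when $x$ is blue, Lemma~\ref{lem:rooted-gadget}(i) with $r=a$ forces $t$ red and gives $s-t-a$. You also explicitly check that only $x$ and $a$ have neighbours outside the copy of $F$; the paper leaves that check implicit.
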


\begin{proof}
Assume that $s$ is red. If $x$ is red, then by Lemma~\ref{lem:rooted-gadget}(ii), at least one of $a,b$ is red (denote it by $z$). Hence there is a red path $s-x-z$ inside $R(s,t)$.

Now suppose that $x$ is blue. By Lemma~\ref{lem:rooted-gadget}(i), applied with $r=a$, the vertex $a$ is red and has no red neighbour inside the copy of $F$. The only neighbour of $a$ outside that copy is $t$. Since $a$ must have a red neighbour, $t$ is red. Hence $s-t-a$ is a red path inside $R(s,t)$.
\end{proof}

Let $G_{40}$ be the graph on vertex set $\{0,1,\ldots,39\}$ with the following 54 edges:
\[
\begin{array}{rrrrrrrr}
0\text{-}1 & 0\text{-}2 & 0\text{-}39 & 1\text{-}3 &
1\text{-}12 & 2\text{-}3 & 2\text{-}4 & 3\text{-}5 \\
4\text{-}6 & 5\text{-}6 & 5\text{-}7 & 6\text{-}8 &
7\text{-}9 & 7\text{-}10 & 8\text{-}9 & 8\text{-}10 \\
11\text{-}12 & 11\text{-}13 & 11\text{-}22 & 12\text{-}14 &
13\text{-}14 & 13\text{-}15 & 14\text{-}16 & 15\text{-}17 \\
16\text{-}17 & 16\text{-}18 & 17\text{-}19 & 18\text{-}20 &
18\text{-}21 & 19\text{-}20 & 19\text{-}21 & 22\text{-}24 \\
22\text{-}30 & 23\text{-}25 & 23\text{-}30 & 24\text{-}25 &
24\text{-}26 & 25\text{-}27 & 26\text{-}28 & 26\text{-}29 \\
27\text{-}28 & 27\text{-}29 & 30\text{-}31 & 31\text{-}32 &
31\text{-}39 & 32\text{-}34 & 33\text{-}34 & 33\text{-}35 \\
33\text{-}39 & 34\text{-}36 & 35\text{-}37 & 35\text{-}38 &
36\text{-}37 & 36\text{-}38 & &
\end{array}
\]
Equivalently, $G_{40}$ consists of the two rich gadgets $R(0,1)$ and $R(11,12)$, a copy of $F$ with root 30 and distinguished root-neighbour 22, a copy of $F$ with root 31 and distinguished root-neighbour 39, and the four connecting edges
\[
1-12, \qquad 11-22, \qquad 30-31, \qquad 39-0.
\]
The two right-hand copies of $F$ are attached as follows:
\[
(x,a,b,c,d,e,f,g,h)=(30,22,23,24,25,26,27,28,29)
\]
and
\[
(x,a,b,c,d,e,f,g,h)=(31,39,32,33,34,35,36,37,38).
\]
The two copies of $F$ inside the rich gadgets $R(0,1)$ and $R(11,12)$ use
\[
(x,a,b,c,d,e,f,g,h)=(2,3,4,5,6,7,8,9,10)
\]
and
\[
(x,a,b,c,d,e,f,g,h)=(13,14,15,16,17,18,19,20,21),
\]
respectively (see Figure~\ref{fig:g40}).

\begin{theorem}
\label{thm:g40}
The graph $G_{40}$ is subcubic, 2-connected, $K_4$-minor-free, and has no crumby coloring.
\end{theorem}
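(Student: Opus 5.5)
The proof has four parts: subcubicity, $K_4$-minor-freeness, 2-connectivity, and the absence of crumby colorings. The last part carries the weight, and it follows from Lemmas~\ref{lem:rooted-gadget} and~\ref{lem:rich-gadget}.

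\emph{Subcubicity.} Each internal vertex of a copy of $F$ keeps its degree from $F$, which is at most $3$. The only vertices that gain edges are the eight attachment vertices $0,1,11,12,22,30,31,39$. Reading the edge list, their neighbourhoods are
$0:\{1,2,39\}$, $1:\{0,3,12\}$, $11:\{12,13,22\}$, $12:\{1,11,14\}$, $22:\{11,24,30\}$, $30:\{22,23,31\}$, $31:\{30,32,39\}$ and $39:\{0,31,33\}$. So the maximum degree is $3$.

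\emph{Treewidth at most $2$.} I would reuse the elimination certificate from Theorem~\ref{thm:g18}. In each of the four copies of $F$, eliminate $g,h,e,f,c,d,b$ in this order. The table from the previous proof shows that each step has at most two remaining neighbours, and at the end only $x$ and $a$ remain, joined by a fill edge. What is left is the $12$-cycle
\[
0-2-3-1-12-14-13-11-22-30-31-39-0
\]
together with the chords $0\text{-}1$ and $11\text{-}12$. Each chord cuts off a triangle-free face of length $4$ ($0,2,3,1$ and $12,14,13,11$). Repeatedly eliminating a vertex of degree $2$, starting with $2$, $3$, $14$, $13$ and then going around the cycle, gives width at most $2$. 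Hence $G_{40}$ has no $K_4$ minor.

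\emph{2-connectivity.} First, $F$ itself is 2-connected: it has the cycle $x\,a\,c\,d\,b$, the cycle $c\,e\,g\,f\,d$, and the ear $e\,h\,f$. Each copy of $F$ is attached to the rest of the graph through its two vertices $x$ and $a$. The rest of the graph contains the long cycle above, passing through both attachment vertices of every copy, with each gadget's $x$–$a$ segment replaced by a path through the gadget. So $G_{40}$ arises from a 2-connected graph by adding ears: it is the big cycle, then the chords $0\text{-}1$ and $11\text{-}12$, then the remainder of each copy of $F$ added as ears. This makes $G_{40}$ 2-connected.

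\emph{No crumby coloring.} Suppose $G_{40}$ has a crumby coloring. Let $A$ be the copy of $F$ with root $30$ and $a=22$; its only outside neighbours are $22\to 11$ and $30\to 31$. Let $B$ be the copy with root $31$ and $a=39$; its only outside neighbours are $39\to 0$ and $31\to 30$.

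1. If $30$ and $31$ are both red, then each has a red outside neighbour, namely the other. Lemma~\ref{lem:rooted-gadget}(ii) gives a red $p\in\{22,23\}$ and a red $q\in\{39,32\}$. Then $p-30-31-q$ is a red $P_4$, a contradiction.

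2. If $30$ is blue, Lemma~\ref{lem:rooted-gadget}(i) makes $22$ red with no red neighbour inside $A$. So $22$'s red neighbour must be $11$, and $11$ is red.
   - In $R(11,12)$ only $11$ and $12$ have outside neighbours ($22$ and $1$ respectively), so Lemma~\ref{lem:rich-gadget} applies.
   - It gives a red path $11-p-q$ inside $R(11,12)$.
   - Then $22-11-p-q$ is a red $P_4$.

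3. If $31$ is blue, the same argument applies to $B$: Lemma~\ref{lem:rooted-gadget}(i) makes $39$ red with $0$ as its only possible red neighbour, so $0$ is red.
   - In $R(0,1)$ only $0$ and $1$ have outside neighbours ($39$ and $12$ respectively), so Lemma~\ref{lem:rich-gadget} applies.
   - It gives a red path $0-p-q$ inside $R(0,1)$, and $39-0-p-q$ is a red $P_4$.

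All cases lead to a contradiction, so $G_{40}$ has no crumby coloring.

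The main obstacle is bookkeeping rather than ideas. The key checks are that the outside-neighbour hypotheses of both lemmas hold exactly for the labelled copies, and that the 2-connectivity and elimination arguments match the edge list.
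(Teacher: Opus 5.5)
Your proof is correct and essentially the paper's. The colouring argument applies Lemmas~\ref{lem:rooted-gadget} and~\ref{lem:rich-gadget} in the same three situations (30 blue, 31 blue, both red); the ear decomposition differs only in starting from the 12-cycle rather than the 8-cycle $0,1,12,11,22,30,31,39$; and your elimination ordering is a valid treewidth-2 certificate (eliminating $b$ last in each copy leaves only $x,a$ as its neighbours), used in place of the paper's series-parallel expression. One small slip: in the subcubicity check, the vertices $2,3,13,14$ (the $x$ and $a$ of the copies of $F$ inside the rich gadgets) also gain an edge, so your claim that only $0,1,11,12,22,30,31,39$ do is false as stated, although each of them still has degree exactly $3$.
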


\begin{proof}
The vertices of degree 2 in $G_{40}$ are
\[
4,9,10,15,20,21,23,28,29,32,37,38.
\]
All other vertices have degree 3, so $G_{40}$ is subcubic.

We prove 2-connectivity by an open ear decomposition. Start with the cycle
\[
0,1,12,11,22,30,31,39,0.
\]
Then add the following ears, in the displayed order:
\[
\begin{gathered}
0-2-3-1, \qquad 2-4-6-5-3, \qquad 5-7-9-8-6, \qquad 7-10-8, \\
11-13-14-12, \qquad 13-15-17-16-14, \qquad 16-18-20-19-17, \qquad 18-21-19, \\
30-23-25-24-22, \qquad 24-26-28-27-25, \qquad 26-29-27, \\
31-32-34-33-39, \qquad 33-35-37-36-34, \qquad 35-38-36.
\end{gathered}
\]
Each added path has two distinct endpoints in the graph already constructed, and all its internal vertices are new. Hence this is an open ear decomposition, and $G_{40}$ is 2-connected.

\begin{figure}[t]
\centering
\resizebox{.96\linewidth}{!}{%
\begin{tikzpicture}[x=1cm,y=1cm]
  \node[smallgraphnode] (n0) at (0,0) {$0$};
  \node[smallgraphnode] (n1) at (1.35,0) {$1$};
  \node[smallgraphnode] (n12) at (2.90,0) {$12$};
  \node[smallgraphnode] (n11) at (4.25,0) {$11$};
  \node[smallgraphnode,label={[role]45:$a$}] (n22) at (5.75,0) {$22$};
  \node[smallgraphnode,label={[role]45:$x$}] (n30) at (7.35,0) {$30$};
  \node[smallgraphnode,label={[role]45:$x$}] (n31) at (9.05,0) {$31$};
  \node[smallgraphnode,label={[role]45:$a$}] (n39) at (10.55,0) {$39$};

  \node[smallgraphnode,label={[role]45:$x$}] (n2) at (.42,-.90) {$2$};
  \node[smallgraphnode,label={[role]45:$a$}] (n3) at (1.20,-.90) {$3$};
  \node[smallgraphnode,label={[role]135:$b$}] (n4) at (.05,-1.85) {$4$};
  \node[smallgraphnode,label={[role]45:$c$}] (n5) at (1.70,-1.85) {$5$};
  \node[smallgraphnode,label={[role]225:$d$}] (n6) at (.80,-2.70) {$6$};
  \node[smallgraphnode,label={[role]45:$e$}] (n7) at (1.50,-3.45) {$7$};
  \node[smallgraphnode,label={[role]225:$f$}] (n8) at (.78,-4.35) {$8$};
  \node[smallgraphnode,label={[role]45:$g$}] (n9) at (1.35,-4.75) {$9$};
  \node[smallgraphnode,label={[role]225:$h$}] (n10) at (.7,-5.15) {$10$};

  \node[smallgraphnode,label={[role]45:$a$}] (n14) at (3.00,-.90) {$14$};
  \node[smallgraphnode,label={[role]45:$x$}] (n13) at (4.05,-.90) {$13$};
  \node[smallgraphnode,label={[role]135:$c$}] (n16) at (2.75,-1.85) {$16$};
  \node[smallgraphnode,label={[role]45:$b$}] (n15) at (4.45,-1.85) {$15$};
  \node[smallgraphnode,label={[role]315:$d$}] (n17) at (3.65,-2.70) {$17$};
  \node[smallgraphnode,label={[role]45:$e$}] (n18) at (2.85,-3.45) {$18$};
  \node[smallgraphnode,label={[role]45:$f$}] (n19) at (4.15,-4.35) {$19$};
  \node[smallgraphnode,label={[role]45:$g$}] (n20) at (3.10,-4.75) {$20$};
  \node[smallgraphnode,label={[role]45:$h$}] (n21) at (4.45,-5.15) {$21$};

  \node[smallgraphnode,label={[role]45:$c$}] (n24) at (5.95,-1.15) {$24$};
  \node[smallgraphnode,label={[role]45:$b$}] (n23) at (7.35,-1.15) {$23$};
  \node[smallgraphnode,label={[role]30:$d$}] (n25) at (6.68,-2.00) {$25$};
  \node[smallgraphnode,label={[role]45:$e$}] (n26) at (6.05,-3.00) {$26$};
  \node[smallgraphnode,label={[role]45:$f$}] (n27) at (7.15,-3.10) {$27$};
  \node[smallgraphnode,label={[role]60:$g$}] (n28) at (5.95,-3.95) {$28$};
  \node[smallgraphnode,label={[role]45:$h$}] (n29) at (7.35,-3.95) {$29$};

  \node[smallgraphnode,label={[role]45:$b$}] (n32) at (9.05,-1.15) {$32$};
  \node[smallgraphnode,label={[role]45:$c$}] (n33) at (10.25,-1.15) {$33$};
  \node[smallgraphnode,label={[role]30:$d$}] (n34) at (9.45,-2.00) {$34$};
  \node[smallgraphnode,label={[role]45:$e$}] (n35) at (10.55,-3.00) {$35$};
  \node[smallgraphnode,label={[role]45:$f$}] (n36) at (9.25,-3.10) {$36$};
  \node[smallgraphnode,label={[role]60:$g$}] (n37) at (9.45,-3.95) {$37$};
  \node[smallgraphnode,label={[role]45:$h$}] (n38) at (10.75,-3.95) {$38$};

  \foreach \u/\v in {
    n0/n2,n1/n3,n2/n3,n2/n4,n3/n5,n4/n6,n5/n6,n5/n7,n6/n8,n7/n9,n7/n10,n8/n9,n8/n10,
    n11/n13,n12/n14,n13/n14,n13/n15,n14/n16,n15/n17,n16/n17,n16/n18,n17/n19,n18/n20,n18/n21,n19/n20,n19/n21,
    n30/n23,n22/n24,n23/n25,n24/n25,n24/n26,n25/n27,n26/n28,n26/n29,n27/n28,n27/n29,
    n31/n32,n39/n33,n32/n34,n33/n34,n33/n35,n34/n36,n35/n37,n35/n38,n36/n37,n36/n38}
    \draw[graphedge] (\u) -- (\v);

  \foreach \u/\v in {n0/n1,n1/n12,n12/n11,n11/n22,n22/n30,n30/n31,n31/n39}
    \draw[topcycle] (\u) -- (\v);
  \draw[topcycle] (n39) .. controls (8.7,3.00) and (1.7,3.00) .. (n0);

  \node[font=\scriptsize] at (.85,-5.75) {$R(0,1)$};
  \node[font=\scriptsize] at (3.55,-5.75) {$R(11,12)$};
  \node[font=\scriptsize] at (6.70,-4.65) {copy of $F$, root 30};
  \node[font=\scriptsize] at (9.95,-4.65) {copy of $F$, root 31};
\end{tikzpicture}%
}
\caption{The 2-connected counterexample $G_{40}$. Small labels mark the roles $x,a,b,c,d,e,f,g,h$ inside the four copies of $F$.}
\label{fig:g40}
\end{figure}

For $K_4$-minor-freeness, let $E$ denote a single two-terminal edge and let $\cdot$ and $\parallel$ denote series and parallel composition. The rooted graph $F$, viewed as a two-terminal graph with terminals $x$ and $a$, is series-parallel; for example, it is the graph
\[
 Q=E\parallel(P_2\cdot A\cdot E), \qquad P_2=E\cdot E, \qquad A=E\parallel(E\cdot(P_2\parallel P_2)\cdot E).
\]
Here $Q$ is the two-terminal copy of $F$ whose first terminal is the root $x$ and whose second terminal is the distinguished neighbour $a$. The rich gadget $R(s,t)$ is the $s$-to-$t$ graph
\[
 R=E\parallel(E\cdot Q\cdot E),
\]
where the parallel edge is the terminal edge $st$, and the other branch is the edge $sx$, followed by $Q$ from $x$ to $a$, followed by the edge $at$. Thus all four modules in Figure~\ref{fig:g40} are two-terminal series-parallel networks.

Let $X^{\mathrm{rev}}$ denote the same two-terminal network $X$ with its terminal order reversed. The graph obtained by deleting the edge $0-39$ is, with the terminal labels read along the top path $0,1,12,11,22,30,31,39$, the series composition
\[
 R\cdot E\cdot R^{\mathrm{rev}}\cdot E\cdot Q^{\mathrm{rev}}\cdot E\cdot Q.
\]
Here the first $R$ is $R(0,1)$, the reversed copy $R^{\mathrm{rev}}$ is $R(11,12)$ read from 12 to 11, the $Q^{\mathrm{rev}}$ copy is the $F$-module read from 22 to its root 30, and the final $Q$ copy is the $F$-module read from its root 31 to 39. Reversing the terminal order of a two-terminal series-parallel network preserves series-parallelness, and adding the edge $0-39$ is one final parallel composition with $E$. Therefore $G_{40}$ is series-parallel, hence has treewidth at most 2, and so contains no $K_4$ minor.

It remains to exclude crumby colorings. Assume, for contradiction, that $G_{40}$ has one.

First we show that 30 is red. If 30 were blue, then Lemma~\ref{lem:rooted-gadget}(i), applied to the copy of $F$ with root 30 and distinguished neighbour 22, would imply that 22 is red and has no red neighbour inside that copy of $F$. The only neighbour of 22 outside the copy is 11, so 11 must be red. By Lemma~\ref{lem:rich-gadget}, the red vertex 11 is rich inside $R(11,12)$; say $11-u-v$ is a red path there. Then $22-11-u-v$ is a red $P_4$, a contradiction. Hence 30 is red.

Next, 31 is not red. Indeed, if 31 were red, then Lemma~\ref{lem:rooted-gadget}(ii) would give red neighbours $u$ of 30 and $v$ of 31 inside their respective copies of $F$. Then $u-30-31-v$ would be a red $P_4$, contradiction. Thus 31 is blue.

Now apply Lemma~\ref{lem:rooted-gadget}(i) to the copy of $F$ with root 31 and distinguished neighbour 39. Since 31 is blue, the vertex 39 is red and has no red neighbour inside that copy of $F$. Its only neighbour outside the copy is 0, so 0 must be red. By Lemma~\ref{lem:rich-gadget}, the red vertex 0 is rich inside $R(0,1)$; say $0-u-v$ is a red path there. The edge $39-0$ gives the red $P_4$
\[
39-0-u-v,
\]
a contradiction. Therefore $G_{40}$ has no crumby coloring.
\end{proof}

Thus the obstruction shown above is not merely a consequence of cut vertices: even within the 2-connected \texorpdfstring{$K_4$}{K4}-minor-free subcubic graphs, crumby colorings need not exist. In particular, $G_{40}$ disproves the natural 2-connected strengthening of Conjecture~16 of Bar\'at, Bl\'azsik and Dam\'asdi~\cite{barat2023crumby}.

\section{Concluding remarks}

The graph $G_{18}$ disproves Conjecture 16 exactly as stated. The graph $G_{40}$ shows that the natural repair obtained by imposing 2-connectivity is also false. Both graphs are subcubic partial 2-trees, so they contain no $K_4$ minor. The proof for $G_{40}$ uses only repeated applications of the same rooted forcing lemma as the 18-vertex example.

We do not know whether 18 is the minimum order of a connected counterexample, or whether 40 is optimal among 2-connected subcubic $K_4$-minor-free counterexamples. Since $K_4$-minor-free graphs have treewidth at most 2, there is no nontrivial 3-connected version of the question. Finally, the examples here do not address the bipartite or outerplanar variants discussed in \cite{barat2023crumby}: the gadget $F$ contains the 5-cycle $c-e-g-f-d-c$, and contracting the edge $cd$ gives a $K_{2,3}$ minor. Thus both $G_{18}$ and $G_{40}$ are non-bipartite and non-outerplanar.

\section*{Funding}
J\'ozsef Pint\'er is funded by Project No. KDP-IKT-2023-900-I1-00000957/0000003 with support provided by the Ministry of Culture and Innovation of Hungary from the National Research, Development and Innovation Fund, financed under the KDP-2023 funding scheme. The funder had no role in the design of the study, the preparation of the manuscript, or the decision to submit the article for publication.

\section*{Declaration of competing interest}
The author declares that he has no competing interests.

\section*{Data availability}
No datasets were generated or analyzed during the current study.

\section*{Declaration of generative AI and AI-assisted technologies in the manuscript preparation process}
During the preparation of this work, the author used OpenAI's ChatGPT for brainstorming, organization, \LaTeX{} drafting, and editorial assistance. The author reviewed and edited the output and takes full responsibility for the content of the manuscript.

\section*{Acknowledgements}
The author thanks J. Bar\'at, Z. L. Bl\'azsik, and G. Dam\'asdi for their work on crumby colorings and for formulating the $K_4$-minor-free conjecture resolved here.

\bibliographystyle{abbrvurl}
\bibliography{references}

\end{document}